\documentclass[11pt,letterpaper]{amsart}

\usepackage{tikz}
\usetikzlibrary{arrows.meta}
\usepackage{url}

\allowdisplaybreaks%

{\theoremstyle{plain}%
 \newtheorem{theorem}{Theorem}
 
 \newtheorem{lemma}{Lemma}%
}
{\theoremstyle{remark}

}
{\theoremstyle{definition}
\newtheorem{definition}{Definition}
\newtheorem{example}{Example}
}

\title[The fibers of the complementary Bell numbers]{The finitude of the fibers of the complementary Bell numbers}
\author{John M.\ Campbell}
\address{Department of Mathematics and Statistics, Dalhousie University,
Halifax, NS B3H 4R2, Canada}
\email{jh241966@dal.ca}
 
\keywords{complementary Bell number, Stirling number, 2-adic interpolation, Strassmann's theorem, 
 Bell polynomial, partial Motzkin path, Tate algebra, difference equation, difference operator}

\subjclass[2020]{Primary 05A18; Secondary 11B73, 11S80}

\begin{document}

\begin{abstract}
 Subbarao and Verma introduced, in 1999, a number of open problems concerning the sequence $(f(n))_{n \geq 0}$ of 
 complementary Bell numbers, which may be defined via Bell polynomials $B_{n}(x) = \sum_{k=0}^{n} \left\{ \begin{smallmatrix} n \\ 
 k \end{smallmatrix} \right\} x^k$ so that $f(n) = B_{n}(-1)$. Yang [\emph{Electron.\ J.\ Combin.}, 2001] subsequently solved the first two 
 problems from Subbarao and Verma, but the third such problem has remained open, to the best of our knowledge. The first part of 
 this third problem asks whether or not $f(n)$ takes any given value only a finite number of times. We solve this problem in 
 the affirmative, through a combined application of finite difference-based methods, partial Motzkin paths, the completeness of the 
 Tate algebra with respect to the Gauss norm, and Strassmann’s theorem. 
 \end{abstract}

\maketitle

\section{Introduction and background}
 Following the work of Subbarao and Verma \cite{SubbaraoVerma2001}, we write 
\begin{equation}\label{defineestar}
 \prod_{n=2}^{\infty} \big( 1 - n^{-s} \big) = 1 + \sum_{n=2}^{\infty} 
 e^{\ast}(n) n^{-s} 
\end{equation}
 for $\Re(s) > 1$. The integer sequence $\big( e^{\ast}(n) \big)_{n \geq 1}$, for $e^{\ast}(n)$ defined via the Dirichlet series in 
 \eqref{defineestar}, admits a natural number-theoretic and combinatorial interpretation, with 
\begin{equation}\label{interpretestar} 
 e^{\ast}(n) = \sum_{\substack{\text{factorizations $F$ of $n$ into } \\ \text{distinct integers $\geq 2$} }} 
 (-1)^{\text{$\#$ of integers in $F$}}, 
\end{equation} 
 letting it be understood that the factorizations in \eqref{interpretestar} are without regard to any orderings of factors. Being consistent 
 with Subbarao and Verma's notation, we write $f(k)$ in place of $e^{\ast}( p_{i_1} p_{i_2} \cdots p_{i_k} )$ for distinct primes $p_{i_1}$, 
 $p_{i_2}$, $\ldots$, $p_{i_k}$. The integer sequence $(f(k))_{k \geq 0}$, with the understanding that $f(0) = 1$, is referred to as the 
 sequence of \emph{complementary Bell numbers}. Subbarao and Verma proved that 
\begin{equation}\label{limsupf}
 \limsup_{k\to \infty} \frac{\log|f(k)|}{k \log k} = 1, 
\end{equation}
 and concluded with a number of open problems concerning the growth of $f(k)$.   The first two such problems were solved by Yang  
  \cite{Yang2001}, as reviewed below.    The purpose of this paper is to solve the first part of the third problem given by 
 Subbarao and Verma and reviewed below. 
 Based on extant literature related to Subbarao and Verma's work 
 \cite{AmdeberhanDeAngelisMoll2013,DeAngelisMarcello2016,DeWannemackerLaffeyOsburn2007,KataiSubbarao2006,Klazar2003Bell,Klazar2003Counting,Yang2001} \cite[\S3.2]{Mezo2020}, 
 it appears that this third problem has remained open. 

 Let $\left\{ \begin{smallmatrix} n \\ k \end{smallmatrix} \right\} $ denote the Stirling number of the second kind
 with parameters $n$ and $k$, giving the number of set-partitions of an $n$-set into $k$ subsets. 
 Bell numbers and Bell polynomials 
 provide important topics of research in both number theory and combinatorics, 
 and we may express Bell polynomials in terms of Stirling numbers of the second kind so that 
\begin{equation}\label{defineBnx}
 B_{n}(x) = \sum_{k=0}^{n} \left\{ \begin{matrix} n \\ k \end{matrix} \right\} x^k, 
\end{equation}
 with the $x = 1$ case yielding the sequence of Bell numbers. 
 The $x = -1$ case of \eqref{defineBnx} is especially significant in the number-theoretic study of Bell polynomials, 
 as suggested in Mez\H o's text on 
 counting sequences \cite[\S3.2.3]{Mezo2020}. 
 This $x = -1$ case is also central to our work, since 
 $ f(k) = B_k(-1)$. 

 Subbarao and Verma \cite{SubbaraoVerma2001}, in addition to proving the relation in \eqref{limsupf}, concluded with a number of 
 problems concerning or related to $f(k)$. Problem 5.5 from their work
 asks whether or not $f(k)$ changes sign infinitely often. 
 This was later solved in the affirmative by Yang \cite{Yang2001}. 
 Problem 5.6 from Subbarao and Verma 
 asks whether or not $|f(k)|$ is monotonically increasing as $k \to \infty$, for all sufficiently large $k$.
 This was subsequently 
 solved in the negative by Yang. 
 Problem 5.7 from Subbarao and Verma 
 asks, under the assumption that the answer to Problem 5.6 is `no' (as later confirmed by Yang), 
 whether or not $f(k)$ takes any given value only a finite number of times, 
 and also asks whether or not no value is taken more than once apart from $1$ and $-9$. 

 To the best of our knowledge, the first question in Problem 5.7 \cite{SubbaraoVerma2001} has remained open, prior to our current paper. 
 We succeed, 
 through our extensive interactions with GPT-5.6 
 Pro, in solving this first part of Problem 5.7, in the affirmative, i.e., by proving that: For every fixed integer $c$, the relation
\begin{equation}\label{displaymain}
 \#\{ k \geq 0 : f(k) = c \} < \infty 
\end{equation}
 holds. 
 In addition to how this solves an open problem from Subbarao--Verma, 
 the interest in this result 
 and the techniques we introduce to prove this result 
 may be seen in relation to the long-standing conjecture 
 known as \emph{Wilf's conjecture}. 

 The \(c=0\) case of \eqref{displaymain} has been studied extensively in connection with \emph{Wilf's conjecture}, 
 which asserts that 
\(f(k)\neq 0\) for every \(k>2\) (noting that $f(2)=0$). 
 De Wannemacker, Laffey, and Osburn
obtained restrictions on the possible zeros of
\(f(k)\) \cite{DeWannemackerLaffeyOsburn2007}. Amdeberhan, De Angelis,
and Moll \cite{AmdeberhanDeAngelisMoll2013} subsequently gave an alternative proof of the
result that the equation \(f(k)=0\) has at most two solutions. 
 An economical
account of this argument, including corrections to the original proof,
was later given by De Angelis and Marcello
\cite{DeAngelisMarcello2016}. Thus the fiber \(f^{-1}(0)\) was already
known to be finite, whereas our main theorem establishes the finiteness
of $f^{-1}(c)$ for each $c$ in $\mathbb{Z}$.
 
 The study of analytic properties of complementary Bell numbers traces back
 to a 1950 work by Beard \cite{Beard1950}, who introduced and proved a formula for $f(n)$ via Poisson summation. 
 This may be seen as something of an antecedent to Yang's analytic approach toward
 the study of complementary Bell numbers \cite{Yang2001}. In turn, this 
 motivates our proof of \eqref{displaymain} for an arbitrary integer $c$. 

 \section{Preliminaries}\label{secPrelim}
 The preliminaries covered below are required for our purposes. 
 To begin with, we introduce some difference operators that we require. 

 We write $\mathbb{N}$ in place of the set of positive integers, 
 and we write $\mathbb{N}_{0} $ in place of the 
 set of nonnegative integers, and, for a given sequence (defined on $\mathbb{N}_{0}$), 
 we write $\mathcal{S}$ in place
 of the forward-shift operator
 defined so that
\begin{equation}\label{definecalS}
 (\mathcal{S}g)(n) = g(n+1). 
\end{equation} 
 Similarly, we define the difference operator 
 $\Delta_h$ so that 
\begin{equation}\label{Deltahgn} 
 \big( \Delta_h g \big)(n) = g(n + h) - g(n), 
\end{equation} 
 and we recall that the iteration of this operator gives rise to the relation 
 $ \Delta_h^r g(n) = \sum_{j=0}^{r} (-1)^{r-j} \binom{r}{j} g(n + h j)$. 
 We also write $\Delta = \Delta_1$ in place of the usual forward-difference operator. 

\subsection{Partial Motzkin paths}

\begin{definition}\label{def:weighted-Motzkin-array}
 For \(n,k\in\mathbb{N}_0\), let \(\mathcal{M}_{n,k}\) denote 
 the set of \emph{partial Motzkin paths}, i.e., lattice paths from
\((0,0)\) to \((n,k)\), remaining weakly above the horizontal axis and
using only the steps
 $ (1,1)$, $(1,0)$, and $(1,-1)$. 
\end{definition}

\begin{example}
 An illustration of an element in $\mathcal{M}_{5, 2}$ 
 is given in Figure \ref{fig:weighted-partial-Motzkin-path}. 

\begin{figure}[ht]
\centering

\begin{tikzpicture}[
 x=1cm,
 y=1cm,
 pathstep/.style={
 very thick,
 -{Stealth[length=2.2mm,width=1.6mm]}
 },
 vertex/.style={
 circle,
 fill=black,
 inner sep=1.6pt
 },
 steplabel/.style={
 fill=white,
 inner sep=1.2pt,
 font=\small
 }
]

\draw[step=1cm, very thin, gray!45] (0,0) grid (5,5);

\draw[-{Stealth[length=2mm]}, thick]
 (0,0) -- (5.55,0) node[right] {$i$};
\draw[-{Stealth[length=2mm]}, thick]
 (0,0) -- (0,5.55) node[above] {$h$};

\foreach \x in {0,...,5}
 \node[below] at (\x,0) {\small $\x$};

\foreach \y in {1,...,5}
 \node[left] at (0,\y) {\small $\y$};

\coordinate (v0) at (0,0);
\coordinate (v1) at (1,1);
\coordinate (v2) at (2,2);
\coordinate (v3) at (3,2);
\coordinate (v4) at (4,1);
\coordinate (v5) at (5,2);

\draw[pathstep]
 (v0) --
 node[midway, above left, steplabel] {$\beta_0=-1$}
 (v1);

\draw[pathstep]
 (v1) --
 node[midway, above left, steplabel] {$\beta_1=-1$}
 (v2);

\draw[pathstep]
 (v2) --
 node[midway, above, steplabel] {$\gamma_2=1$}
 (v3);

\draw[pathstep]
 (v3) --
 node[midway, above right, steplabel] {$\delta_1=2$}
 (v4);

\draw[pathstep]
 (v4) --
 node[midway, above left, steplabel] {$\beta_1=-1$}
 (v5);

\foreach \v in {v0,v1,v2,v3,v4,v5}
 \node[vertex] at (\v) {};

\node[fill=white, inner sep=2pt] at (3.3,4.25)
 {$P=UUHDU\in\mathcal M_{5,2}$};

\end{tikzpicture}

\caption{A weighted partial Motzkin path of length \(5\) ending at height \(2\).}
\label{fig:weighted-partial-Motzkin-path}
\end{figure}

\end{example}

\begin{definition}\label{defineweight}
 For \(h\in\mathbb{N}_0\), define 
\begin{align*}
 \beta_h & :=
 \begin{cases}
 -(h+1),&\text{if \(h\) is even},\\[1mm]
 -\dfrac{h+1}{2},&\text{if \(h\) is odd},
 \end{cases} \\ 
 \gamma_h & :=h-1, \\ 
 \delta_h & :=
 \begin{cases}
 1,&\text{if \(h\) is even},\\
 2,&\text{if \(h\) is odd}.
 \end{cases}
\end{align*}
 Let \(P\in\mathcal{M}_{n,k}\). The \emph{weight} of a step of \(P\) is assigned so that 
\[
 \begin{array}{rcll}
 (i,h)\longrightarrow(i+1,h+1)
 &:& \beta_h,
 &\text{for an up-step},\\[1mm]
 (i,h)\longrightarrow(i+1,h)
 &:& \gamma_h,
 &\text{for a horizontal step},\\[1mm]
 (i,h+1)\longrightarrow(i+1,h)
 &:& \delta_h,
 &\text{for a down-step}.
 \end{array}
\]
 The \emph{weight} of $P$, denoted by \(\operatorname{wt}(P)\), is the product
of the weights of all its steps, with the convention whereby the empty path is assigned weight \(1\). 
\end{definition}

\begin{example}
 For the path in Figure~\ref{fig:weighted-partial-Motzkin-path}, the successive
step weights are
$\beta_0$, 
$\beta_1$, 
$\gamma_2$, $\delta_1$, and $\beta_1$. 
 Here the down-step from height \(2\) to height \(1\) has weight
\(\delta_1\), since the index is the height of its lower endpoint. Therefore, the equalities among 
$ \operatorname{wt}(P)
 =\beta_0\beta_1\gamma_2\delta_1\beta_1 
 =(-1)(-1)(1)(2)(-1) 
 =-2$ hold. 
 \end{example}

\begin{definition}\label{defineWnk}
 Let 
$
 W_{n,k}:=
 \sum_{P\in\mathcal{M}_{n,k}}\operatorname{wt}(P)$ 
 for nonnegative integers $n$ and $k$, 
 where an empty sum is understood to be equal to $0$, with $
 W_{0,0}=1$ and 
 $ W_{0,k}=0$ for $k \geq 1$. 
\end{definition}

\begin{example}
 The infinite array such that its $(i, j)$-entry is equal to $W_{i-1, j-1}$
 is illustrated below, with 
\begin{equation}\label{infinitearray}
\left(
\begin{array}{cccccccc}
 1 & 0 & 0 & 0 & 0 & 0 & 0 & \cdots \\
 -1 & -1 & 0 & 0 & 0 & 0 & 0 & \cdots \\
 0 & 1 & 1 & 0 & 0 & 0 & 0 & \cdots \\
 1 & 2 & 0 & -3 & 0 & 0 & 0 & \cdots \\
 1 & -1 & -5 & -6 & 6 & 0 & 0 & \cdots \\
 -2 & -11 & -10 & 15 & 30 & -30 & 0 & \cdots \\
 -9 & -18 & 16 & 120 & 30 & -270 & 90 & \cdots \\
 \vdots & \vdots & \vdots & \vdots & \vdots & \vdots & \vdots & \ddots \\
\end{array}
\right).
\end{equation}
\end{example}

 Recall that $W_{0, 0} = 1$ and that $W_{0, k} = 0$ for integers $k \geq 1$. As suggested in the numerical evaluations displayed in 
 \eqref{infinitearray}, we have that $W_{n, k} = 0$ for $k > n$. 
 These base cases lead us to the known recurrences 
\begin{equation}\label{Wrec1} 
 W_{n+1, 0} = \gamma_0 W_{n, 0} +\delta_0 W_{n, 1} 
\end{equation}
 and 
\begin{equation}\label{Wrec2} 
 W_{n+1, k} = \beta_{k-1} W_{n, k-1} + \gamma_k W_{n, k} + \delta_k W_{n, k+1}, 
\end{equation}
 with both \eqref{Wrec1} and \eqref{Wrec2} being given in An's thesis
 \cite[p.\ 19]{An2010}. 

 An \cite[\S1.3--2.2]{An2010} showed how complementary Bell numbers can be expressed via the enumeration of Motzkin paths, by 
 showing, in an equivalent way, that 
\begin{equation}\label{connectfW} 
 f(n) = W_{n, 0}
\end{equation}
 for all nonnegative integers $n$. The relation in 
 \eqref{connectfW} is implicit in An's thesis \cite[p.\ 19]{An2010} and given explicitly
 in a closely related preprint by An\footnote{See \url{https://arxiv.org/abs/0807.2150v1}.}
 and can be can also be shown using the given recurrences in 
 \eqref{Wrec1} and \eqref{Wrec2}, via the use of exponential generating functions. 
 The relation in \eqref{connectfW} is key for our purposes, in terms of our use of 
 the result from An highlighted as Lemma \ref{Anlemma} below. 
 
 Recalling the definition of $\Delta_h$ in \eqref{Deltahgn}, we let it be understood that $ \Delta_h$ acts on the bivariate function $W_{n, 
 k}$ with respect to the initial index $n$, i.e., so that 
 $ \Delta_h W_{n, k} = W_{n + h, k} - W_{n, k}$. 

\begin{lemma}\label{Anlemma}
 (An, 2010) For $n, k \in \mathbb{N}_{0}$ and for $r, t \in \mathbb{N}$, 
 the congruence
\begin{equation}\label{Andisplay}
 \Delta_{6(2t-1)}^r W_{n, k} \equiv 0 \pmod{2^r} 
\end{equation}
 holds \cite[p.\ 20]{An2010}. 
\end{lemma}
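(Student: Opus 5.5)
We would recast the recurrences \eqref{Wrec1} and \eqref{Wrec2} as a matrix iteration and reduce the whole statement to a single congruence $T^{6}\equiv I \pmod 2$ for the transfer matrix $T$. Index rows and columns by $\mathbb{N}_0$. Let $T$ be the infinite tridiagonal integer matrix with
\[
T_{k,k-1}=\beta_{k-1},\qquad T_{k,k}=\gamma_k,\qquad T_{k,k+1}=\delta_k .
\]
Let $v_n=(W_{n,k})_{k\ge 0}$ be a column vector. Then \eqref{Wrec1}--\eqref{Wrec2} say $v_{n+1}=Tv_n$, so $v_n=T^n e_0$. Every entry of a product of such matrices is a finite sum, because $T$ is banded, so ordinary matrix algebra applies.

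Since $\Delta_h$ acts on the index $n$, we get $\Delta_h^r v_n=(T^h-I)^rT^ne_0$. It therefore suffices to show that $T^{h}\equiv I \pmod 2$ entrywise for $h=6(2t-1)$. Then $T^h=I+2M$ with $M$ an integer banded matrix, and $(T^h-I)^r=2^rM^r$, which gives \eqref{Andisplay}. Moreover it is enough to prove $T^6\equiv I\pmod 2$, since every $h=6(2t-1)$ is a multiple of $6$.

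Next we reduce $T$ modulo $2$ using Definition \ref{defineweight}.
\begin{itemize}
\item For even $h$, $\beta_h=-(h+1)$ and $\gamma_h=h-1$ are odd, and $\delta_h=1$.
\item For odd $h$, $\gamma_h$ is even and $\delta_h=2\equiv 0$.
\item For $h=2j+1$, $\beta_{2j+1}=-(j+1)$ is odd exactly when $j$ is even.
\end{itemize}
Group the indices into blocks $\{2j,2j+1\}$. Modulo $2$, each diagonal block is
\[
\begin{pmatrix}\gamma_{2j}&\delta_{2j}\\ \beta_{2j}&\gamma_{2j+1}\end{pmatrix}\equiv\begin{pmatrix}1&1\\1&0\end{pmatrix}=:A .
\]
This is the Fibonacci matrix, and $A^3\equiv I\pmod 2$. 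The entries $T_{2j+1,2j+2}=\delta_{2j+1}$, which would couple block $j+1$ back to block $j$, all vanish mod $2$. The only surviving off-block entries are $T_{2j+2,2j+1}=\beta_{2j+1}$ with $j$ even. Hence
\[
T\equiv D+N \pmod 2,
\]
where $D$ is block diagonal with blocks $A$, and $N$ only maps block $j$ to block $j+1$, and only for $j$ even.

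The key observation is that any product containing two factors of $N$, separated by powers of $D$, vanishes. The first $N$ lands in an odd-indexed block, $D$ preserves blocks, and $N$ kills odd-indexed blocks. Expanding $(D+N)^3$ over $\mathbb{F}_2$ therefore gives
\[
T^3\equiv D^3+N',\qquad N'=\sum_{a+b=2}D^aND^b .
\]
Here $D^3\equiv I$. The matrix $N'$ again carries only even-indexed blocks into odd-indexed ones, so $N'^2\equiv 0$. Thus
\[
T^6\equiv (I+N')^2=I+2N'+N'^2\equiv I \pmod 2,
\]
and the argument of the first paragraph finishes the proof.

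The main obstacle is the parity bookkeeping in the reduction step: confirming that every $\delta_{\text{odd}}$ vanishes mod $2$, and that $\beta_{2j+1}$ is odd only for even $j$. This is what makes $N$ "one-step nilpotent". If some coupling there failed, I would instead compute the $4\times 4$ mod-$2$ block structure over two consecutive blocks by hand and search for the actual period.
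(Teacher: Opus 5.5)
Your argument is correct. It differs from the paper, which gives no proof of this lemma and imports it as a black box from An's thesis. You prove it from scratch using only the recurrences \eqref{Wrec1}--\eqref{Wrec2}, which follow directly from last-step decomposition of the weighted paths.

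Writing $v_n=T^ne_0$ reduces everything to the single congruence $T^{6}\equiv I \pmod 2$, and your parity bookkeeping checks out:
\begin{itemize}
\item $\gamma_{2j+1}=2j$ and $\delta_{2j+1}=2$ are even;
\item $\beta_{2j}$, $\gamma_{2j}$ and $\delta_{2j}$ are odd;
\item $\beta_{2j+1}=-(j+1)$ is odd exactly when $j$ is even.
\end{itemize}
Hence $T\equiv D+N$ with $D^3\equiv I$, and every word containing two factors $N$ vanishes, so your contingency plan is never needed. As a consistency check, the row $n=6$ of \eqref{infinitearray} reduces to $(1,0,0,\dots)$ modulo $2$, as $T^6\equiv I$ predicts.

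Beyond self-containedness, your route gives more:
\begin{itemize}
\item Only $6\mid h$ is used, so the restriction to odd multiples $6(2t-1)$ is unnecessary.
\item Writing $T^6=I+2M$ gives $T^{12}=I+4(M+M^2)$. Hence $\Delta_{12}^{r}W_{n,k}\equiv 0 \pmod{2^{2r}}$ for every $k$, which is Lemma~\ref{nu2Delta12} (in stronger, all-$k$ form) obtained at once. The paper derives it separately via $\Delta_{12}=\Delta_6^2+2\Delta_6$, the same identity in operator form.
\item Iterating gives $T^{6\cdot 2^{s}}\equiv I \pmod{2^{s+1}}$ if sharper $2$-adic control is ever wanted.
\end{itemize}

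The one point you should state explicitly is routine. Banded integer matrices indexed by $\mathbb{N}_0$ multiply via finite sums, so their products are associative and reduction modulo $2$ is a ring homomorphism. This justifies both $\Delta_h^r v_n=(T^h-I)^rT^ne_0$ and computing $T^6$ modulo $2$ as $(I+N')^2$.
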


\subsection{2-adic analysis}\label{prelim2adic}
 Letting $q$ denote a nonzero rational number, we write $q = 2^{m} \frac{a}{b}$ for $m \in \mathbb{Z}$ and for odd integers $a$ and 
 $b$. The value $m$ is determined uniquely and may be denoted with $\nu_2(q)$ and is referred to as the \emph{normalized additive 
 $2$-adic valuation of $q$}, and we adopt the convention whereby $\nu_2(0) = +\infty$. 
 The \emph{$2$-adic absolute value} on $\mathbb{Q}$ may then be defined so that 
\begin{equation}\label{2adicabs}
 \left| q \right|_{2} := \begin{cases} 
 0, & \text{if $q = 0$;} \\ 
 2^{-\nu_2(q)}, & \text{if $q \neq 0$.} 
 \end{cases} 
\end{equation}
 The definition in \eqref{2adicabs} then gives rise to the \emph{2-adic metric}
 defined so that $d_{2}(x, y) = |x - y|_{2}$. 
 The field $\mathbb{Q}_{2}$ of \emph{2-adic numbers} may then be defined as the completion of 
 $\mathbb{Q}$ as a metric space with respect to $d_2$, 
 and we let the unity element in $\mathbb{Q}_{2}$ be denoted as $1$. 
 We may then define the ring $\mathbb{Z}_{2}$ of \emph{2-adic integers}
 so that $\mathbb{Z}_{2} := \{ x \in \mathbb{Q}_{2} : |x|_{2} \leq 1 \}$, noting that this is a ring with unity. 

 For each nonzero element $x \in \mathbb{Q}_{2}$, there exists a unique value $m \in \mathbb{Z}$ and a unique $2$-adic unit $u \in 
 \mathbb{Z}_{2}^{\times}$ satisfying $x = 2^{m} u$. We then let $\nu_2(x) := m$, 
 again with the convention that $\nu_2(0) = +\infty$. 

 The \emph{one-variable Tate algebra over $\mathbb{Q}_2$} may then be defined so that 
\begin{equation}\label{defineTate}
 \mathbb{Q}_{2}\langle z \rangle 
 = \left\{ \sum_{j=0}^{\infty} b_{j} z^{j} : b_{j} \in \mathbb{Q}_{2}, \, \left| b_{j} \right|_{2} \to 0 \right\}. 
\end{equation}
 For an element
 $F(z) = \sum_{j=0}^{\infty} b_{j} z^{j}$ in the family on the right-hand side of 
 \eqref{defineTate}, with $b_{j} \in \mathbb{Q}_{2}$ for a nonnegative integer $j$, 
 the \emph{Gauss norm} of $F$ is defined so that 
\begin{equation}\label{defineGauss}
 \| F \|_{\operatorname{G}} := \max_{j \geq 0} \left| b_{j} \right|_{2}. 
\end{equation}
 Observe that the maximum value purportedly defined on the right of \eqref{defineGauss}
 exists as a consequence of the sequence $\big( |b_j|_{2} : j \in \mathbb{N}_{0} \big)$
 tending to $0$. The Gauss norm may equivalently be defined, for nonzero $F$, so that 
\begin{equation}\label{alternativenorm} 
 \| F \|_{\operatorname{G}} = 2^{-\min\{ \nu_2(b_j) : b_j \neq 0 \}}. 
\end{equation}
 The norm $\| \cdot \|_{\operatorname{G}}$
 satisfies the ultrametric triangle inequality, i.e., so that:
 For $F = F(z)$ and $G = G(z)$ in $\mathbb{Q}_{2}\langle z \rangle$, the relation 
\begin{equation}\label{ultrametric}
 \| F + G \|_{\operatorname{G}} \leq \max\{ \| F \|_{\operatorname{G}}, \| G \|_{\operatorname{G}} \} 
\end{equation}
 holds. 

 Let $ G(z) = \sum_{j=0}^{\infty} b_{j} z^{j} \in \mathbb{Q}_{2}\langle z \rangle $
 and let $y \in \mathbb{Z}_{2}$. Now, consider partial sums of the form 
 $ G_{N}(y) := \sum_{j=0}^{N} b_{j} y^{j}$. 
 Letting $M$ be an integer exceeding $N$, 
 the ultrametric inequality in \eqref{ultrametric}
 gives us, by exploiting the relation $|y|_{2} \leq 1$, that 
\begin{align*}
 \left| G_{M}(y) - G_{N}(y) \right|_{2}
 & = \left| \sum_{j = N + 1}^{M} b_{j} y^{j} \right|_{2} \\ 
 & \leq \max_{N < j \leq M} \left| b_{j} y^{j} \right|_{2} \\ 
 & \leq \max_{N < j \leq M} \left| b_{j} \right|_{2}. 
\end{align*}
 Since we let $G$ be a member of $\mathbb{Q}_{2}\langle z \rangle$, 
 the definition of the Tate algebra gives us that $|b_{j}|_{2}$ approaches $0$ as $j \to \infty$. 
 So, we find that $\big( G_{N}(y) \big)_{N \geq 0}$ 
 is a Cauchy sequence	 with respect to $| \cdot |_{2}$ in $\mathbb{Q}_{2}$. 
 Since $\mathbb{Q}_{2}$ is complete (by definition), 
 we obtain that $\big( G_{N}(y) \big)_{N \geq 0}$ has a limit 
 in $\mathbb{Q}_{2}$. This allows us to define
\begin{equation}\label{defineGy} 
 G(y) := \lim_{N \to \infty} \sum_{j=0}^{N} b_{j} y^{j} 
\end{equation}
 as an element in $\mathbb{Q}_{2}$, recalling the assumption that $y \in \mathbb{Z}_{2}$. 

 Again for $y \in \mathbb{Z}_{2}$, letting $G(y)$ be as above, an application of the ultrametric inequality gives us that $ 
 \left| G_{N}(y) \right| $ $ \leq $
 $ \max_{0 \leq j \leq N} \left| b_{j} y^{j} \right|_{2} $
 $ \leq $
 $ \max_{0 \leq j \leq N} \left| b_{j} \right|_{2} $
 $ \leq $
 $ \| G \|_{\operatorname{G}}$. 
 By then setting $N \to \infty$, we find that 
\begin{equation}\label{dominateGauss}
 \left| G(y) \right|_{2} \leq \| G \|_{\operatorname{G}}. 
\end{equation} 

 A key to our construction in Section \ref{secmain}
 relies on the completeness of the Tate algebra with respect to $\| \cdot \|_{\operatorname{G}}$. 
 For background on and a derivation of this result (given in an equivalent way), 
 we refer to Bosch's text on formal and 
 rigid geometry \cite[p.\ 14]{Bosch2014}.

\subsection{Strassmann's theorem}
 For background related to our formulation of Strassmann's theorem required as a central component of our construction, we refer to 
 the appropriate text by 
 Cohen \cite[\S4.5.1]{Cohen2007}, which provides
 a slightly stronger version of the following formulation
 that we require. 

 \ 

\noindent {\bf Strassmann's Theorem:} Let $ G(z) = \sum_{j=0}^{\infty} b_{j} z^{j} \in \mathbb{Q}_{2}\langle z \rangle $ be nonzero. Set 
 $ M(G) := \max_{j \geq 0} |b_{j}|_{2} $ and $ N(G) := \max\{ j \geq 0 : |b_j|_{2} = M(G) \}$. Then $G$ has at most $N(G)$ distinct zeros 
 in $\mathbb{Z}_{2}$. 

\section{Main result}\label{secmain}
 Setting $t = 1$ and $k = 0$ in \eqref{Andisplay}, the above formulation of An's lemma, as in Lemma \ref{Anlemma}, gives us that 
\begin{equation}\label{nu2Delta6}
 \nu_2\big( \Delta_{6}^{r} f(n) \big) \geq r. 
\end{equation}
 This leads us toward the following companion to \eqref{nu2Delta6} required for our purposes. Informally, we need a greater lower bound, 
 relative to \eqref{nu2Delta6}, due to the growth/behavior of $\nu_{2}(r!)$, since we need to consider the 2-adic valuations of the 
 powers of $z$ arising in the expansion of expressions of the form $\binom{z}{r} \Delta_{12}^r f(a)$. 

\begin{lemma}\label{nu2Delta12}
 For all integers $n \geq 0$ and $r \geq 1$, the relation $ \nu_2\big( \Delta_{12}^{r} f(n) \big) \geq 2 r $ holds.
\end{lemma}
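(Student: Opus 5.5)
The plan is to factor the operator $\Delta_{12}$ through $\Delta_{6}$ and then apply the bound \eqref{nu2Delta6}, which Lemma \ref{Anlemma} gives with $t=1$ and $k=0$. Write operators in terms of the shift $\mathcal{S}$ from \eqref{definecalS}, so that $\Delta_h = \mathcal{S}^h - 1$. Then
\[
 \Delta_{12} = \mathcal{S}^{12} - 1 = (\mathcal{S}^{6}-1)(\mathcal{S}^{6}+1) = \Delta_{6}\,(\Delta_{6} + 2).
\]
All of these operators are polynomials in $\mathcal{S}$, so they commute. Raising to the $r$-th power and expanding binomially gives the operator identity
\[
 \Delta_{12}^{r} = \Delta_6^{r}(\Delta_6+2)^{r} = \sum_{j=0}^{r} \binom{r}{j} 2^{\,r-j}\, \Delta_{6}^{\,r+j}.
\]

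Next we apply this identity to $f$ and evaluate at $n$:
\[
 \Delta_{12}^{r} f(n) = \sum_{j=0}^{r} \binom{r}{j} 2^{\,r-j}\, \Delta_{6}^{\,r+j} f(n).
\]
The right-hand side is a finite sum of integers. For each $j$ with $0 \le j \le r$, we have $r+j \geq 1$, so \eqref{nu2Delta6} applies with $r+j$ in place of $r$. It gives $\nu_2\big(\Delta_6^{r+j} f(n)\big) \geq r+j$. The binomial coefficient is an integer with nonnegative valuation, so each summand has $2$-adic valuation at least $(r-j) + (r+j) = 2r$. By the ultrametric property of $\nu_2$ (the valuation of a sum is at least the minimum of the valuations of the summands), we conclude $\nu_2\big(\Delta_{12}^{r} f(n)\big) \geq 2r$.

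There is no serious obstacle here. The only point needing care is that \eqref{nu2Delta6} is used for every exponent $r+j$ between $r$ and $2r$, not just for $r$. This is legitimate because Lemma \ref{Anlemma} holds for all $r \in \mathbb{N}$ and all $n \in \mathbb{N}_0$. One should also check that the formal operator identity is valid on sequences indexed by $\mathbb{N}_0$. It is, since only forward shifts appear and both sides are finite linear combinations of $f(n+6i)$.
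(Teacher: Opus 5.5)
Your proposal is correct and matches the paper's proof: the paper also writes $\Delta_{12} = \Delta_6^2 + 2\Delta_6$, expands $\Delta_{12}^{r} = \sum_{j=0}^{r} \binom{r}{j} 2^{r-j} \Delta_6^{r+j}$, and applies An's bound $\nu_2(\Delta_6^{r+j} f(n)) \geq r+j$ to each term, so every summand has valuation at least $2r$. Your remarks that the operators commute and that An's lemma is applied at every exponent $r+j \geq 1$ spell out justifications the paper leaves implicit.
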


\begin{proof}
 From the definition in \eqref{Deltahgn}, we find that $\Delta_{12} = \Delta_{6}^{2} + 2 \Delta_{6}$. The iteration of $\Delta_{12}$ then 
 gives rise to the expansion 
\begin{equation}\label{Delta12r}
 \Delta_{12}^{r} = \sum_{j=0}^{r} \binom{r}{j} 2^{r-j} \Delta_{6}^{r+j}. 
\end{equation}
 We then apply each side of \eqref{Delta12r} to the sequence of complementary Bell numbers, writing 
\begin{equation}\label{Delta12tof} 
 \Delta_{12}^{r} f(n) = \sum_{j=0}^{r} \binom{r}{j} 2^{r-j} \Delta_6^{r+j} f(n). 
\end{equation}
 From An's lemma and its reformulation in \eqref{nu2Delta6}, we find that 
\begin{equation}\label{divideterms}
 2^{r + j} \mid \Delta_{6}^{r+j} f(n). 
\end{equation}
 From \eqref{Delta12tof} and \eqref{divideterms} together, we find that 
 each term on the right-hand side of \eqref{Delta12tof} is divisible by $2^{2r}$, and hence 
 the desired result. 
\end{proof}

\begin{lemma}\label{lemmaFf}
 For each $a \in \{ 0, 1, \ldots, 11 \}$   there is a power series $F_{a}(z)$ in $\mathbb{Q}_{2}\langle z \rangle$   such that:   For all $m \in  
  \mathbb{N}_{0}$, the equality   $F_{a}(z) |_{z = m} = f(a + 12 m)$ holds. 
\end{lemma}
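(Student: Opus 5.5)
We will use Mahler/Newton interpolation along the arithmetic progression $a, a+12, a+24, \ldots$. For fixed $a \in \{0,1,\ldots,11\}$, Newton's forward difference formula gives, for every $m \in \mathbb{N}_0$,
\[
 f(a + 12 m) = \sum_{r=0}^{m} \binom{m}{r} \Delta_{12}^{r} f(a),
\]
and the sum may be extended to all $r \geq 0$ because $\binom{m}{r} = 0$ for $r > m$. We therefore set
\[
 F_{a}(z) := \sum_{r=0}^{\infty} c_r \binom{z}{r}, \qquad c_r := \Delta_{12}^{r} f(a) \in \mathbb{Z},
\]
where $\binom{z}{r} = z(z-1)\cdots(z-r+1)/r!$ is a polynomial in $\mathbb{Q}[z]$ of degree $r$. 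The task is to show that this series, regrouped by powers of $z$, defines an element of $\mathbb{Q}_2\langle z \rangle$, and that evaluating it at $z = m$ returns the finite Newton sum.

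First we bound the Gauss norm of each term. The coefficients of $z(z-1)\cdots(z-r+1)$ are integers, namely signed Stirling numbers of the first kind. Hence
\[
 \Bigl\| \binom{z}{r} \Bigr\|_{\operatorname{G}} \leq |1/r!|_2 = 2^{\nu_2(r!)}.
\]
Legendre's formula gives $\nu_2(r!) = r - s_2(r) \leq r - 1$ for $r \geq 1$. Lemma~\ref{nu2Delta12} gives $\nu_2(c_r) \geq 2r$. Therefore
\[
 \Bigl\| c_r \binom{z}{r} \Bigr\|_{\operatorname{G}} \leq 2^{-2r + r - 1} = 2^{-(r+1)} \to 0.
\]
This is exactly why the exponent $2r$ of Lemma~\ref{nu2Delta12} is needed rather than the exponent $r$ of \eqref{nu2Delta6}: with only $\nu_2(c_r) \geq r$, the bound would not tend to zero.

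Next we build the limit. The partial sums $P_N(z) = \sum_{r \leq N} c_r \binom{z}{r}$ are polynomials, so they lie in $\mathbb{Q}_2\langle z \rangle$. By the ultrametric inequality \eqref{ultrametric}, $\|P_M - P_N\|_{\operatorname{G}} \leq 2^{-(N+2)}$ for $M > N$, so $(P_N)$ is Cauchy in the Gauss norm. By completeness of the Tate algebra under $\|\cdot\|_{\operatorname{G}}$ (Bosch), it converges to some $F_a \in \mathbb{Q}_2\langle z \rangle$. Concretely, the coefficient of $z^j$ in $F_a$ is the $2$-adically convergent sum of the $z^j$-coefficients of the terms $c_r\binom{z}{r}$ with $r \geq j$.

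Finally we check the values at nonnegative integers. For $m \in \mathbb{N}_0$, the bound \eqref{dominateGauss} gives $|F_a(m) - P_N(m)|_2 \leq \|F_a - P_N\|_{\operatorname{G}} \to 0$. This uses that evaluation at a point of $\mathbb{Z}_2$ is additive, which is immediate from the definition \eqref{defineGy} as a limit of partial sums. For $N \geq m$ we have $P_N(m) = \sum_{r \leq m} \binom{m}{r} c_r = f(a+12m)$ by Newton's formula. Hence $F_a(m) = f(a+12m)$.

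The main obstacle is the convergence estimate, which requires the strengthened divisibility $\nu_2(\Delta_{12}^r f(n)) \geq 2r$ to beat the denominators $r!$. Lemma~\ref{nu2Delta12} already supplies this, so what remains is routine bookkeeping in the Tate algebra.
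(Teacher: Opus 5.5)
Your proposal is correct and is essentially the paper's own proof. It uses the same series $\sum_{r\ge 0}\binom{z}{r}\Delta_{12}^{r}f(a)$ and the same term bound $2^{-(r+1)}$, obtained from Lemma~\ref{nu2Delta12} and $\nu_2(r!)\le r-1$. It then uses completeness of $\mathbb{Q}_2\langle z\rangle$ under $\|\cdot\|_{\operatorname{G}}$ and evaluation at $m\in\mathbb{N}_0$ via \eqref{dominateGauss} together with Newton's forward-difference formula, which the paper justifies by writing $\mathcal{S}=\operatorname{id}+\Delta'$ and applying the binomial theorem. Your side remark is also right: with only $\nu_2(\Delta_{12}^r f(a))\ge r$ the term bound would be $2^{-s_2(r)}$, which does not tend to $0$ along $r=2^k$.
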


\begin{proof}
 Fix an integer parameter $a \in \{ 0, 1, \ldots, 11 \}$. Let the operator $\Delta'$ act on the sequence of expressions of the form $f(a + 
 12 m)$ for $m \in \mathbb{N}_{0}$ with respect to $m \in \mathbb{N}_{0}$ so that
\begin{equation}\label{Deltaprime}
 \Delta' f(a + 12 m) = f(a + 12 (m+1)) - f(a + 12m), 
\end{equation}
 we then rewrite $(\Delta')^{r} f(a + 12 m) \big|_{m=0}$ in terms of $\Delta_{12}$, i.e., with
\begin{equation}\label{rewriteDelta}
 (\Delta')^{r} f(a + 12 m) \big|_{m=0} = \Delta_{12}^{r} f(a + m) \big|_{m=0}. 
\end{equation}
 By then letting it be understood that $\Delta_{12}$ acts on the $f$-sequence according to the original definition in \eqref{Deltahgn}, we 
 may rewrite \eqref{rewriteDelta} so that 
\begin{equation}\label{actseq}
 (\Delta')^{r} f(a + 12 m) \big|_{m=0} = \Delta_{12}^{r} f(a). 
\end{equation}
 
 Let $z$ be an indeterminate. For a given integer $r \geq 1$, we write $\binom{z}{r} = \frac{z(z-1) \cdots (z-r + 1)}{r!}$, with 
 $\binom{z}{0}$ being equal to the unity element in the Tate algebra in \eqref{defineTate}. Since $z (z-1) \cdots (z - r + 1)$ is in 
 $\mathbb{Z}[z]$, by expanding $ \binom{z}{r} \, \Delta_{12}^{r} f(a)$ in powers of $z$, 
 we find that the 2-adic valuation of each of the resulting coefficients
 is at least $\nu_2\big( \Delta_{12}^{r} f(a) \big) - \nu_2(r!)$. 
 Legendre's formula gives us that 
\begin{equation}\label{fromLegendre} 
 \nu_2(r!) = \sum_{\ell \geq 1} \left\lfloor \frac{r}{2^{\ell}} \right\rfloor, 
\end{equation}
 and, with the assumption that $r \geq 1$, the relation in  \eqref{fromLegendre} gives us that  $\nu_{2}(r!) \leq r - 1$. This together with 
 Lemma \ref{nu2Delta12} give us that every coefficient in  the expansion of  $ \binom{z}{r} \, \Delta_{12}^{r} f(a)$ in powers of $z$  has a 
 $2$-adic valuation at least 
\begin{equation}\label{boundnuDelta}
 \nu_{2}\left( \Delta_{12}^{r} f(a) \right) - \nu_{2}(r!)
 \geq 2 r - (r-1) = r + 1.
\end{equation}
 From \eqref{alternativenorm} and \eqref{boundnuDelta}, we obtain the bound 
\begin{equation}\label{boundGauss}
 \left\| \binom{z}{r} \, \Delta_{12}^{r} f(a) \right\|_{\operatorname{G}} \leq 2^{-(r+1)} 
\end{equation}
 for $r \geq 1$.
 From the ultrametric triangle inequality in \eqref{ultrametric}
 together with the bound in \eqref{boundGauss}, we find that 
\begin{align}
\begin{split}
 \left\| \sum_{r=N+1}^{M} \binom{z}{r} \, \Delta_{12}^{r} f(a) \right\|_{\operatorname{G}}
 & \leq \max_{N+1 \leq r \leq M} \left\| \binom{z}{r} \, \Delta_{12}^{r} f(a) \right\|_{\operatorname{G}} \\ 
 & \leq 2^{-(N+2)}. 
\end{split}\label{givesCauchy}
\end{align}

 Define 
\begin{equation}\label{defineSaM}
 S_{a, M}(z) = \sum_{r=0}^{M} \binom{z}{r} \, \Delta_{12}^{r} f(a). 
\end{equation}
 The inequalities in \eqref{givesCauchy} then give us that 
 the sequence $\big( S_{a, M} \big)_{M \geq 0}$ is a Cauchy sequence with respect to $\| \cdot \|_{\operatorname{G}}$. 
 Since $\mathbb{Q}_{2}\langle z \rangle$ is complete with respect to $\| \cdot \|_{\operatorname{G}}$, 
 there exists a unique element $F_{a}(z)$ in 
 $\mathbb{Q}_{2}\langle z \rangle$
 such that the partial sums in \eqref{defineSaM}
 converge to $F_{a}(z)$
 with respect to the Gauss norm. 
 This allows us to write 
\begin{equation}\label{defineFaz}
 F_{a}(z) := \sum_{r=0}^{\infty} \binom{z}{r} \, \Delta_{12}^{r} f(a). 
\end{equation} 
 From the completeness of $ \mathbb{Q}_{2}\langle z \rangle$ with 
 respect to $\| \cdot \|_{\operatorname{G}}$, 
 we can conclude that the power series expansion used to define $F_{a}(z)$
 converges to an element in $\mathbb{Q}_{2}\langle z \rangle$. 

 Recall (see Section \ref{prelim2adic}) that: For $y \in \mathbb{Z}_{2}$ and for $G = G(z) \in \mathbb{Q}_{2}\langle z \rangle$, the limit on 
 the right of \eqref{defineGy} exists 
 in such a way so that we may define $G(y) \in \mathbb{Q}_{2}$
 in the manner indicated above. 
 This allows us to define 
\begin{equation}\label{Facolon}
 \mathcal{F}_{a}\colon \mathbb{Z}_{2} \to \mathbb{Q}_{2} 
\end{equation}
 so that 
 $ \mathcal{F}_{a}(y) := F_a(y) $ 
 for $y \in \mathbb{Z}_{2}$. 
 Using \eqref{dominateGauss}, we find that 
\begin{align}
\begin{split}
 \left| S_{a, N}(y) - \mathcal{F}_{a}(y) \right|_{2} 
 & = \left| \left( S_{a, N}(z) - F_{a}(z) \right) |_{z = y} \right| \\ 
 & \leq \| S_{a, N} - F_{a} \|_{\operatorname{G}}. 
\end{split}\label{split2G}
\end{align}
 As demonstrated above, the partial sums in \eqref{defineSaM}
 converge to $F_{a}(z)$
 with respect to $\| \cdot \|_{\operatorname{G}}$, 
 so that \eqref{split2G}
 allows us to conclude that $S_{a, N}$ approaches $\mathcal{F}_{a}(y)$ with respect to $\| \cdot \|_{2}$
 as $N \to \infty$. This allows us to write 
\begin{equation}\label{definecalF} 
 \mathcal{F}_{a}(y) = \sum_{r=0}^{\infty} \binom{y}{r} \Delta_{12}^{r} f(a), 
\end{equation}
 again for $y \in \mathbb{Z}_{2}$. 

 Now, we choose an element $m \in \mathbb{N}_{0} \subseteq \mathbb{Z}_{2}$ in the domain in \eqref{Facolon}. Observe that 
 $\binom{m}{r}$ vanishes for each integer $r > m$, 
 and this along with the definition in \eqref{definecalF} give us the finite sum expansion such that 
\begin{equation}\label{finiteforF}
 \mathcal{F}_{a}(m) = \sum_{r=0}^{m} \binom{m}{r} \, \Delta_{12}^{r} f(a). 
\end{equation}
 Recalling the operator $\mathcal{S}$ defined in Section \ref{secPrelim}, 
 and recalling that we are letting 
 $a \in \{ 0, 1, \ldots, 11 \}$ be understood as a fixed parameter, 
 we let it be understood that $\mathcal{S}$
 acts on the sequence of expressions of the form $f(a + 12 j)$
 for $j \in \mathbb{N}_{0}$ so that 
 $\mathcal{S} f(a + 12 j) = f(a + 12 (j+1))$. 
 From \eqref{Deltaprime}, we then find that 
 	 $ \Delta' f(a + 12j) = (\mathcal{S} - \operatorname{id}) \, f(a + 12 j)$, 
 for the identity operator $\operatorname{id}$, and with the understanding that sums/differences of operators
 defined on the same sequence are to be understood in a pointwise fashion. 
 So, with the underanding that we are restricting our attention to operators
 on the sequence $\big( f(a + 12 j) \big)_{j \geq 0}$, 
 we find that $\mathcal{S} = \operatorname{id} + \Delta'$. 
 Since $\operatorname{id}$ and $\Delta'$ commute, the binomial theorem gives us that 
\begin{equation}\label{fromcommutative} 
 \mathcal{S}^{m} = \sum_{r=0}^{m} \binom{m}{r} \left( \Delta' \right)^{r}. 
\end{equation}
 We proceed to apply \eqref{fromcommutative} to both 
 sides of $\big( f(a + 12 j) \big)_{j \geq 0}$, and by then setting $j=0$, we find 
 (recalling \eqref{definecalS}) that 
\begin{align}
\begin{split}
 f(a + 12 m) & = \mathcal{S}^{m} f(a + 12 j) \big|_{j=0} \\ 
 & = \sum_{r=0}^{m} \binom{m}{r} \left( \Delta' \right)^{r} 
 f(a + 12 j) \big|_{j=0}. 
\end{split}\label{binomialj0}
\end{align}
 From \eqref{actseq} and \eqref{binomialj0} together, we find that 
\begin{equation}\label{binomialf}
 f(a + 12m) = \sum_{r=0}^{m} \binom{m}{r} \, \Delta_{12}^{r} f(a). 
\end{equation}
 A comparison between \eqref{finiteforF} and \eqref{binomialf} 
 allows us to conclude that 
 $\mathcal{F}_{a}(m) = f(a + 12m)$, recalling that both $m \in \mathbb{N}_{0}$ and $a \in \{ 0, 1, \ldots, 11 \}$
 are arbitrary. 
\end{proof}

\begin{lemma}\label{lemmainterpolate}
 For every $a \in \{ 0, 1, \ldots, 11 \}$, the interpolating function $F_{a}$ 
 is nonconstant. 
\end{lemma}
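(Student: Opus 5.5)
Suppose, for contradiction, that $F_a$ is constant for some fixed $a \in \{0,1,\ldots,11\}$; that is, $F_a(z) = c$ in $\mathbb{Q}_2\langle z\rangle$ for a constant $c$. Evaluating at the nonnegative integers and using Lemma~\ref{lemmaFf}, we get $f(a+12m) = F_a(m) = c$ for every $m \in \mathbb{N}_0$. So the subsequence $\big(f(a+12m)\big)_{m\geq 0}$ would be constant, and in particular bounded in absolute value. The plan is to contradict this with the archimedean growth of the complementary Bell numbers.

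The cleanest route is the identity \eqref{binomialf}. If $f(a+12m)$ is constant in $m$, then
\[
\Delta_{12}^r f(a) = (\Delta')^r f(a+12m)\big|_{m=0} = 0 \qquad\text{for all } r\geq 1.
\]
Shifting the base point changes nothing, so $\Delta_{12} f(a+12m) = 0$, i.e.\ $f(a+12(m+1)) = f(a+12m)$ for all $m$. We therefore need to show that $|f(n)|$ does not stay bounded along the residue class $n \equiv a \pmod{12}$.

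The $\limsup$ relation \eqref{limsupf} of Subbarao and Verma only supplies a subsequence of all $n$ along which $|f(n)|$ is large. That subsequence might avoid the class $a \bmod 12$, so \eqref{limsupf} alone is not enough; this is the main obstacle. I would get around it by converting boundedness on one residue class into boundedness everywhere, using the three-term structure behind \eqref{Wrec1}--\eqref{Wrec2}. Concretely, I would use the standard recurrence
\[
f(n+1) = -\sum_{k=0}^{n}\binom{n}{k} f(k),
\]
which follows from $B_{n+1}(x) = x\sum_k \binom{n}{k} B_k(x)$. A cleaner alternative is the exponential generating function $\sum_n f(n)\frac{x^n}{n!} = e^{1-e^{x}}$. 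Constancy along a residue class, $f(a+12m) = c$, means the $12$-section
\[
\frac{1}{12}\sum_{\zeta^{12}=1}\zeta^{-a} e^{1-e^{\zeta x}}
\]
equals $c\sum_m x^{a+12m}/(a+12m)!$. The right-hand side is a finite combination of $e^{\omega x}$ with $\omega^{12}=1$, hence of exponential type $1$. The left-hand side, by contrast, has order $\infty$: each term $e^{1-e^{\zeta x}}$ grows doubly exponentially along rays where $\operatorname{Re}(\zeta x)$ is large and $e^{\zeta x}$ is near the negative real axis. The twelve terms cannot cancel, since they are rotations of a single function and dominate in different sectors. To compare growth, I would examine $|x| \to \infty$ along a ray where exactly one $\zeta x$ has maximal real part.

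If this analytic comparison turns out to be delicate, the fallback is to pin the growth of $|f(n)|$ down directly via Beard's or Yang's asymptotic formula, $f(n) \sim$ an oscillating term of size roughly $(n/\log n)^{n}$ times a cosine. I would then verify that the cosine factor cannot vanish for all large $n$ in a fixed class mod $12$. In either approach the contradiction shows $F_a$ is nonconstant.
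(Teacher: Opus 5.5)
Your argument is correct in outline, but it takes a far longer road than the paper. Both proofs start the same way: by Lemma~\ref{lemmaFf}, if $F_a$ were constant then $f(a+12m)$ would be independent of $m$. The paper then just looks at $m=0$ and $m=1$. From the tabulated values, $|f(a+12)|\ge 17731>2180\ge|f(a)|$ for every $a\in\{0,\ldots,11\}$, so $F_a(0)=f(a)\neq f(a+12)=F_a(1)$. The ``main obstacle'' you identify, that \eqref{limsupf} might miss the class of $a$ modulo $12$, therefore never arises: refuting constancy needs only two values, not growth along the class. You can drop the detour through $\Delta_{12}^r f(a)=0$. You can also drop the idea of transferring boundedness between residue classes via the recurrence, which as written is not an argument. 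Your generating-function route does go through, but you should evaluate at discrete points rather than along a whole ray. Along a ray, $|e^{1-e^{\zeta x}}|$ is doubly exponentially large only where $e^{\zeta x}$ lies near the negative real axis. Concretely, take $0<\epsilon<\pi/12$ and $x_n=r_ne^{i\epsilon}$ with $r_n\sin\epsilon=(2n+1)\pi$. Then the $\zeta=1$ term equals $e^{1+e^{r_n\cos\epsilon}}$, while every other term has modulus at most $e^{1+e^{r_n\cos(\pi/6-\epsilon)}}$. So the $12$-section is doubly exponentially large at $x_n$ and cannot be $O(e^{|x|})$. What your approach buys is a computation-free and stronger conclusion: the same estimate shows that $\bigl(f(a+hm)\bigr)_{m\ge0}$ is unbounded for every modulus $h$ and every residue $a$, not merely nonconstant. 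What it costs is a complex-analytic growth argument. The Beard--Yang fallback would cost considerably more, since it rests on an unverified claim about the cosine factor. The paper's proof needs nothing beyond twenty-four explicitly computed values of $f$.
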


\begin{proof}
 Since $f(n) = B_{n}(-1)$, the usual recurrence for Bell polynomials
 gives us that 
 $ f(n) = - \sum_{j=0}^{n-1} \binom{n-1}{j} f(j)$ 
 for positive integers $n$. 
 By appealing to the numerical values among 
 $$ (f(0), \ldots, f(11)) = (1, -1, 0, 1, 1, -2, -9, -9, 50, 267, 413, -2180) $$
 and 
\begin{multline*}
 (f(12), \ldots, f(17))
 = (-17{\,}731, -50{\,}533, 110{\,}176, 1{\,}966{\,}797, \\ 9{\,}938{\,}669, 8{\,}638{\,}718) 
\end{multline*}
 and 
\begin{multline*}
 (f(18), \ldots, f(23))
 = (-278{\,}475{\,}061, -2{\,}540{\,}956{\,}509, -9{\,}816{\,}860{\,}358, \\ 
 27{\,}172{\,}288{\,}399, 725{\,}503{\,}033{\,}401, 5{\,}592{\,}543{\,}175{\,}252), 
\end{multline*}
 we find that 
 $ \max_{0\leq a < 12} |f(a)| = 2180 $
 and that 
 $ \min_{0\leq a < 12} |f(a + 12)| = 17{\,}731$. 
 Hence, for every $a \in \{ 0, \ldots, 11 \}$, we have that 
 $ |F_{a}(1)| = |f(a + 12)| > |f(a)| = |F_{a}(0)|$. 
 Thus $F_{a}(0) \neq F_{a}(1)$, so $F_{a}$ is nonconstant. 
\end{proof}

 This leads us toward our main result, as below. 

\begin{theorem}\label{maintheorem}
 For each $c \in \mathbb{Z}$, the fiber
$ \left\{ n \in \mathbb{N}_{0} : f(n) = c \right\} $ is finite. 
\end{theorem}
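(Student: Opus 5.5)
The plan is to split the fiber according to residues modulo $12$ and handle each residue class with the $2$-adic interpolation from Lemma \ref{lemmaFf} together with Strassmann's theorem. Fix $c \in \mathbb{Z}$. Every $n \in \mathbb{N}_0$ can be written uniquely as $n = a + 12m$ with $a \in \{0,1,\ldots,11\}$ and $m \in \mathbb{N}_0$. It therefore suffices to show that, for each fixed $a$, the set
\[
 E_a := \{ m \in \mathbb{N}_0 : f(a+12m) = c \}
\]
is finite, since the fiber is the finite union of the sets $\{a + 12m : m \in E_a\}$.

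For fixed $a$, I consider $G_a(z) := F_a(z) - c$, where $F_a \in \mathbb{Q}_2\langle z\rangle$ is the series from Lemma \ref{lemmaFf}. Subtracting the constant $c$ only changes the constant coefficient, so the coefficients of $G_a$ still tend to $0$ $2$-adically and $G_a \in \mathbb{Q}_2\langle z \rangle$. By Lemma \ref{lemmaFf}, $G_a(m) = f(a+12m) - c$ for all $m \in \mathbb{N}_0$. Hence every $m \in E_a$ is a zero of $G_a$ in $\mathbb{N}_0 \subseteq \mathbb{Z}_2$, where evaluation at $m$ is understood as in \eqref{defineGy}.

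The key point is that $G_a$ is not the zero element of the Tate algebra. By Lemma \ref{lemmainterpolate}, $F_a$ is nonconstant, and the proof of that lemma in fact gives $F_a(0) \neq F_a(1)$. Therefore $G_a(0) \neq G_a(1)$, so $G_a$ is not identically zero as a function on $\mathbb{Z}_2$. Since evaluation of the zero series gives $0$ everywhere, $G_a$ is nonzero as an element of $\mathbb{Q}_2\langle z\rangle$.

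Strassmann's theorem then applies. Its hypothesis needs $M(G_a) > 0$ so that $N(G_a)$ is a well-defined finite integer; this holds because the coefficients tend to $0$ and not all of them vanish. So $G_a$ has at most $N(G_a)$ distinct zeros in $\mathbb{Z}_2$, and in particular $\#E_a \le N(G_a) < \infty$. Summing over the twelve residues gives
\[
 \#\{n : f(n) = c\} \le \sum_{a=0}^{11} N(F_a - c) < \infty .
\]

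The only step that needs care is the nonvanishing of $G_a$. Here I have to make sure that ``nonconstant'' in Lemma \ref{lemmainterpolate} really rules out $F_a - c$ being the zero series for every possible $c$, not just for the values $c = f(a)$ or $c = f(a+12)$. The argument above settles this by working with two evaluations at once: since $G_a(0) \neq G_a(1)$, at least one of them is nonzero, whatever $c$ is. Everything else is formal once Lemma \ref{lemmaFf} and Strassmann's theorem are in hand.
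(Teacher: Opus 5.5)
Your proposal is correct and follows essentially the same route as the paper: split $\mathbb{N}_0$ into the twelve residue classes modulo $12$, apply Strassmann's theorem to $F_a(z) - c$, which is nonzero because $F_a(0) = f(a) \neq f(a+12) = F_a(1)$, and take the finite union. Your explicit remark that two distinct values of $F_a$ rule out $F_a - c$ being the zero series for every $c$, and the resulting bound $\sum_{a} N(F_a - c)$, clarify the paper's argument but do not change it.
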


\begin{proof}
 Let $c$ be a fixed integer. For each value $ a$ in $ \{ 0, 1, \ldots, 11 \}$, 
 we define the $2$-adic analytic function 
\begin{equation}\label{defineGac} 
 G_{a, c}(z) := F_{a}(z) - c. 
\end{equation}
 By Lemma \ref{lemmainterpolate}, 
 the interpolating function $F_{a}$ is nonconstant. 
 This allows us to conclude that 
 the element of the Tate algebra given on either side of the equality in \eqref{defineGac} 
 is not the zero power series in $\mathbb{Q}_{2}\langle z \rangle$. 
 Strassmann's theorem then allows us to conclude that the zero set
\begin{equation}\label{defineZac}
 Z_{a, c} := \{ z \in \mathbb{Z}_{2} : G_{a, c}(z) = 0 \} 
\end{equation}
 is finite. 
 Now, let $m \in \mathbb{N}_{0}$. 
 Lemma \ref{lemmaFf} then gives us the biconditional equivalences such that 
\begin{equation}\label{biconditional}
 f(a+12m) = c 
 \ \Longleftrightarrow 
 \ F_{a}(m) - c = 0 
 \ \Longleftrightarrow \ m \in Z_{a, c} \subseteq \mathbb{Z}_{2}
\end{equation}
 So, from the inclusion of $\mathbb{N}_{0}$ in $\mathbb{Z}_{2}$, 
 along with the finitude the family defined in \eqref{defineZac}, together with the equivalences in 
 \eqref{biconditional}, we find that the set 
\begin{equation}\label{firstfinite}
 \{ m \in \mathbb{N}_{0} : f(a + 12 m) = c \} 
\end{equation}
 is finite: 
 By then using the decomposition of $\mathbb{N}_{0}$ into residue classes modulo $12$, 
 we find that 
\begin{equation}\label{secondfinite} 
 \{ n \in \mathbb{N}_{0} : f(n) = c \} 
 = \bigcup_{a=0}^{11} \{ a + 12 m : m \in \mathbb{N}_{0}, f(a + 12 m) = c \}.
\end{equation}
 From the finitude of \eqref{firstfinite}, 
 the right-hand side of \eqref{secondfinite} 
 is a finite union of finite sets, and is, consequently, finite. 
\end{proof}

 A similar approach can also be used to obtain a stronger result giving us the \emph{uniform} boundedness of the fibers of the sequence 
 of complementary Bell numbers. For brevity, we omit a full derivation of this. 
 Observe that Theorem \ref{maintheorem} also gives us (in an equivalent way) that 
 the sequence of absolute values of complementary Bell numbers tends to infinity. 

\subsection*{Acknowledgements}
 The author acknowledges extensive interactions with GPT-5.6 
 Pro during the exploratory and proof-development stages of this work. All AI-generated suggestions were 
 substantially revised, corrected, and independently verified by the author, who assumes full responsibility for the mathematical content.

\bibliographystyle{plain}
\bibliography{seprefe}

\end{document}